\documentclass[11pt,leqno]{article}
\usepackage[margin=1in]{geometry} 
\usepackage{amssymb,amsfonts,amsmath,bbm,mathrsfs,stmaryrd,mathtools}
\usepackage{xcolor}
\usepackage{url}

\usepackage{graphicx}

\usepackage{accents}

\usepackage{extarrows}

\usepackage[shortlabels]{enumitem}
\usepackage{tensor}

\usepackage{xr}
\usepackage[T1]{fontenc}
\usepackage[utf8]{inputenc}

\usepackage[colorlinks,
linkcolor=black!75!red,
citecolor=blue,
pdftitle={},
pdfproducer={pdfLaTeX},
pdfpagemode=UseNone,
bookmarksopen=true,
bookmarksnumbered=true,
backref=page]{hyperref}

\usepackage{tikz}
\usetikzlibrary{arrows,calc,decorations.pathreplacing,decorations.markings,decorations.shapes,intersections,shapes.geometric,through,fit,shapes.symbols,positioning,decorations.pathmorphing}

\makeatletter
\newlength\zig@L
\newlength\zig@La
\newlength\zig@Lb

\newcommand{\xzigrightarrow}[2][]{%
  \mathrel{%
    \settowidth{\zig@La}{$\scriptstyle #2$}%
    \settowidth{\zig@Lb}{$\scriptstyle #1$}%
    \zig@L=\zig@La\relax
    \ifdim\zig@Lb>\zig@L \zig@L=\zig@Lb\fi
    \advance\zig@L by 2.2em\relax
    \tikz[baseline=-0.65ex]{%
      \draw[->,
            line cap=round,
            decorate,
            decoration={zigzag,segment length=4pt,amplitude=1.1pt}]%
        (0,0) -- (\zig@L,0)
        node[midway,above=2pt] {$\scriptstyle #2$}%
        \if\relax\detokenize{#1}\relax\else
          node[midway,below=2pt] {$\scriptstyle #1$}%
        \fi
      ;
    }%
  }%
}
\makeatother

\makeatletter
\newcommand{\squigjoin}{1mu} 

\def\sqleft@{\sim}                    
\def\sqmid@{\sim\mkern-\squigjoin}    

\def\rightsquigarrowfill@{%
  \arrowfill@{\sqleft@}{\sqmid@}{\mkern-4mu\succ}%
}

\newcommand{\xrightsquigarrow}[2][]{%
  \ext@arrow 0359\rightsquigarrowfill@{#1}{#2}%
}
\makeatother

\makeatletter
\newcommand*\circled[1]{\tikz[baseline=(char.base)]{
    \node[shape=circle, draw, inner sep=0pt, 
    minimum height={\f@size},] (char) {\vphantom{WAH1g}#1};}}
\makeatother

\makeatletter
\DeclareRobustCommand\widecheck[1]{{\mathpalette\@widecheck{#1}}}
\def\@widecheck#1#2{%
    \setbox\z@\hbox{\m@th$#1#2$}%
    \setbox\tw@\hbox{\m@th$#1%
       \widehat{%
          \vrule\@width\z@\@height\ht\z@
          \vrule\@height\z@\@width\wd\z@}$}%
    \dp\tw@-\ht\z@
    \@tempdima\ht\z@ \advance\@tempdima2\ht\tw@ \divide\@tempdima\thr@@
    \setbox\tw@\hbox{%
       \raise\@tempdima\hbox{\scalebox{1}[-1]{\lower\@tempdima\box
\tw@}}}%
    {\ooalign{\box\tw@ \cr \box\z@}}}
\makeatother

\usepackage{braket}

\usepackage[amsmath,thmmarks,hyperref]{ntheorem}
\usepackage{cleveref}

\newcommand\nthalias[1]{\AddToHook{env/#1/begin}{\crefalias{lemma}{#1}}}

\nthalias{definition}
\nthalias{example}
\nthalias{examples}
\nthalias{remark}
\nthalias{remarks}
\nthalias{convention}
\nthalias{notation}
\nthalias{construction}
\nthalias{sketch}
\nthalias{theoremN}
\nthalias{propositionN}
\nthalias{corollaryN}
\nthalias{lemma}
\nthalias{proposition}
\nthalias{corollary}
\nthalias{theorem}
\nthalias{conjecture}
\nthalias{question}
\nthalias{assumption}

\creflabelformat{enumi}{#2#1#3}

\crefname{section}{Section}{Sections}
\crefformat{section}{#2Section~#1#3} 
\Crefformat{section}{#2Section~#1#3} 

\crefname{subsection}{\S}{\S\S}
\AtBeginDocument{%
  \crefformat{subsection}{#2\S#1#3}%
  \Crefformat{subsection}{#2\S#1#3}%
}

\crefname{subsubsection}{\S}{\S\S}
\AtBeginDocument{%
  \crefformat{subsubsection}{#2\S#1#3}%
  \Crefformat{subsubsection}{#2\S#1#3}%
}

\theoremstyle{plain}

\newtheorem{lemma}{Lemma}[section]
\newtheorem{proposition}[lemma]{Proposition}

\newtheorem{theorem}[lemma]{Theorem}

\theoremstyle{plain}
\theoremnumbering{Alph}
\newtheorem{theoremN}{Theorem}

\theoremstyle{plain}
\theorembodyfont{\upshape}
\theoremsymbol{\ensuremath{\blacklozenge}}

\newtheorem{definition}[lemma]{Definition}

\newtheorem{remark}[lemma]{Remark}
\newtheorem{remarks}[lemma]{Remarks}

\crefname{definition}{definition}{definitions}
\crefformat{definition}{#2definition~#1#3} 
\Crefformat{definition}{#2Definition~#1#3} 

\crefname{ex}{example}{examples}
\crefformat{example}{#2example~#1#3} 
\Crefformat{example}{#2Example~#1#3} 

\crefname{exs}{example}{examples}
\crefformat{examples}{#2example~#1#3} 
\Crefformat{examples}{#2Example~#1#3} 

\crefname{remark}{remark}{remarks}
\crefformat{remark}{#2remark~#1#3} 
\Crefformat{remark}{#2Remark~#1#3} 

\crefname{remarks}{remark}{remarks}
\crefformat{remarks}{#2remark~#1#3} 
\Crefformat{remarks}{#2Remark~#1#3} 

\crefname{convention}{convention}{conventions}
\crefformat{convention}{#2convention~#1#3} 
\Crefformat{convention}{#2Convention~#1#3} 

\crefname{notation}{notation}{notations}
\crefformat{notation}{#2notation~#1#3} 
\Crefformat{notation}{#2Notation~#1#3} 

\crefname{table}{table}{tables}
\crefformat{table}{#2table~#1#3} 
\Crefformat{table}{#2Table~#1#3}

\crefname{lemma}{lemma}{lemmas}
\crefformat{lemma}{#2lemma~#1#3} 
\Crefformat{lemma}{#2Lemma~#1#3} 

\crefname{proposition}{proposition}{propositions}
\crefformat{proposition}{#2proposition~#1#3} 
\Crefformat{proposition}{#2Proposition~#1#3} 

\crefname{propositionN}{proposition}{propositions}
\crefformat{propositionN}{#2proposition~#1#3} 
\Crefformat{propositionN}{#2Proposition~#1#3} 

\crefname{corollary}{corollary}{corollaries}
\crefformat{corollary}{#2corollary~#1#3} 
\Crefformat{corollary}{#2Corollary~#1#3} 

\crefname{corollaryN}{corollary}{corollaries}
\crefformat{corollaryN}{#2corollary~#1#3} 
\Crefformat{corollaryN}{#2Corollary~#1#3} 

\crefname{theorem}{theorem}{theorems}
\crefformat{theorem}{#2theorem~#1#3} 
\Crefformat{theorem}{#2Theorem~#1#3} 

\crefname{theoremN}{theorem}{theorems}
\crefformat{theoremN}{#2theorem~#1#3} 
\Crefformat{theoremN}{#2Theorem~#1#3} 

\crefname{enumi}{}{}
\crefformat{enumi}{#2#1#3}
\Crefformat{enumi}{#2#1#3}

\crefname{assumption}{assumption}{Assumptions}
\crefformat{assumption}{#2assumption~#1#3} 
\Crefformat{assumption}{#2Assumption~#1#3} 

\crefname{construction}{construction}{Constructions}
\crefformat{construction}{#2construction~#1#3} 
\Crefformat{construction}{#2Construction~#1#3} 

\crefname{sketch}{sketch}{Sketches}
\crefformat{sketch}{#2sketch~#1#3} 
\Crefformat{sketch}{#2Sketch~#1#3} 

\crefname{question}{question}{Questions}
\crefformat{question}{#2question~#1#3} 
\Crefformat{question}{#2Question~#1#3} 

\crefname{equation}{}{}
\crefformat{equation}{(#2#1#3)} 
\Crefformat{equation}{(#2#1#3)}

\numberwithin{equation}{section}

\theoremstyle{nonumberplain}
\theoremsymbol{\ensuremath{\blacksquare}}

\newtheorem{proof}{Proof}
\newcommand\pf[1]{\newtheorem{#1}{Proof of \Cref{#1}}}

\newcommand\bC{{\mathbb C}}

\newcommand\bG{{\mathbb G}}

\newcommand\bR{{\mathbb R}}

\newcommand\bZ{{\mathbb Z}}

\newcommand\cC{{\mathcal C}}

\newcommand\cF{{\mathcal F}}

\newcommand\cL{{\mathcal L}}

\newcommand\cO{{\mathcal O}}

\DeclareMathOperator{\Irr}{Irr}
\DeclareMathOperator{\id}{id}

\DeclareMathOperator{\Spec}{\mathrm{Spec}}

\DeclareMathOperator{\im}{\mathrm{im}}

\DeclareMathOperator{\supp}{\mathrm{supp}}

\newcommand{\qedhere}{\mbox{}\hfill\ensuremath{\blacksquare}}

\newcommand{\comment}[1]{}

\renewcommand{\square}{\mathrel{\Box}}

\title{Uniformity and isotypic smallness for quantum-group representations}
\author{Alexandru Chirvasitu}

\begin{document}

\date{}

\newcommand{\Addresses}{{
  \bigskip
  \footnotesize

  \textsc{Department of Mathematics, University at Buffalo}
  \par\nopagebreak
  \textsc{Buffalo, NY 14260-2900, USA}  
  \par\nopagebreak
  \textit{E-mail address}: \texttt{achirvas@buffalo.edu}

}}

\maketitle

\begin{abstract}
  Compact-group representations on Banach spaces are known to be norm-continuous precisely when they have finite spectra. For a quantum group with continuous-function algebra $\mathcal{C}(\mathbb{G})$ norm continuity can be cast analogously as the bounded weak$^*$-norm continuity of the representation's attached maps $\mathcal{C}(\mathbb{G})^*\to \mathrm{End}(E)$ and its mirror counterpart $E_{\le 1}\times E^*_{\le 1}\to \mathcal{C}(\mathbb{G})$. While the uniformity/isotypic finiteness equivalence no longer holds generally, it does (for the latter map) for compact quantum groups either coamenable or having dimension-bounded irreducible representations. This generalizes the aforementioned classical variant, providing two independent quantum-specific mechanisms of recovering it.  
\end{abstract}

\noindent \emph{Key words:
  Banach-Mazur compactum;
  Cauchy-regular;
  Haar state;
  coamenable;
  compact quantum group;
  isotypic component;
  precompact;
  uniform space
}

\vspace{.5cm}

\noindent{MSC 2020: 46L67; 20G42; 46A32; 47B01; 54E15; 54A20; 22D25; 22D12
  
}


\section*{Introduction}

The paper revolves around a number of quantum variations on a familiar classical theme: the classification of norm-continuous (or \emph{uniform}, as sometimes termed) compact-group representations $\bG\circlearrowright E$ on Banach spaces as precisely those possessing finitely many \emph{isotypic components} (i.e. \cite[Definition 4.21]{hm5} maximal subrepresentations decomposable as sums of copies of a single irreducible $\bG$-representation). The most direct reference is likely \cite[Corollary 2]{zbMATH05628052}, that paper pointing also to a number of (partial) precursors. Further sampling literature includes \cite{klm_unif} (unitary representations of connected, second-countable locally compact groups) and \cite[Theorem 3.10]{Chirvasitu2026JNCG604} for a number of alternative characterizations of representation uniformity.  

The preceding paragraph's ``quantum'' refers to groups. All such featuring below will be compact quantum groups $\bG$ viewed as in \cite[Definition 3.1]{kt_qg-surv-1} (as the notion has crystallized in the now vast surrounding literature: cf. \cite[Chapter 1]{NeTu13}, \cite[\S 2]{wor-cqg}, etc.): objects dual to their respective non-commutative continuous-function unital $C^*$-algebras $\cC(\bG)$, equipped with
\begin{equation*}
  \begin{gathered}
    \cC(\bG)
    \xrightarrow[\quad\text{coassociative}\quad]{\quad\Delta\quad}
    \cC(\bG)\mathbin{\underline{\otimes}}\cC(\bG)
    \quad\left(\text{\emph{minimal} \cite[Definition IV.4.8]{tak1} $C^*$ tensor product}\right)\\
    \Delta\cC(\bG)\left(1\otimes \cC(\bG)\right)
    ,\quad
    \Delta\cC(\bG)\left(\cC(\bG)\otimes 1\right)
    \quad
    \le
    \quad
    \cC(\bG)^{\underline{\otimes}2}
    \quad\text{dense}.
  \end{gathered}  
\end{equation*}
$\bG$-representations on Banach spaces $E$ are cast in \cite[Definition 3.1]{Chirvasitu2026JNCG604} in imitation of the more familiar \cite[Definition 1.4]{podl_symm} setup of $\bG$-actions on unital $C^*$-algebras: 
\begin{equation*}
  \begin{gathered}
    E
    \xrightarrow[(\Delta\otimes\id)\rho=(\id\otimes\rho)\rho]{\ \rho\ }
    \cC(\bG)\otimes_{\varepsilon}E
    \quad\left(\text{\emph{injective} \cite[Definition A.3.61]{dales_autocont} Banach tensor product}\right)\\
    \left(\cC(\bG)\otimes 1\right)\rho E
    \quad
    \le
    \quad
    \cC(\bG)\otimes_{\varepsilon} E
    \quad\text{dense}.
  \end{gathered}  
\end{equation*}
The familiar Peter-Weyl representation theory developed in \cite[Theorem 1.5]{podl_symm} for $\bG$-actions on $C^*$-algebras then transports over \cite[Theorem 3.2 and surrounding discussion]{Chirvasitu2026JNCG604}, affording continuous idempotents $P_{\rho}^{\alpha}\in \cL(E)$ onto the respective $\alpha$-isotypic components for irreducible representations $\alpha\in\Irr(\bG)$ and hence also a notion of \emph{spectrum} $\Spec \rho:=\left\{\alpha\ :\ P_{\rho}^{\alpha}\ne 0\right\}$. 

Norm continuity too has its quantum counterpart(s): \cite[Definition 3.6(3)]{Chirvasitu2026JNCG604} proposed the weak$^*$-to-norm continuity of the map $\cC(\bG)^*\xrightarrow{(\bullet\otimes \id)\rho}\cL(E)$ on the continuous dual of $\c(\bG)$ attached to $\rho$. That condition indeed being equivalent in full (quantum) generality to isotypic finiteness for fairly simple functional-analytic reasons \cite[Theorem 0.2]{2603.12090v3}, \cite[Definition 1.3]{2603.12090v3} proposes \emph{uniformity$_{\le 1}$} as an alternative: the formally weaker constraint that $(\bullet\otimes \id)\rho$ be weak$^*$-norm continuous on the unit ball $\cC(\bG)^*_{\le 1}$.

We refer to that condition as \emph{$(\bG,E)$-uniformity$_{\le 1}$} instead, to distinguish it from the following left-right mirror counterpart.

\begin{definition}\label{def:chiral.unif}
  A representation $\rho:\bG\circlearrowright E$ of a compact quantum group on a Banach space is \emph{$(E,\bG)$-uniform$_{\le 1}$} if the induced map
  \begin{equation*}
    E_{\le 1}\times E^*_{\le 1}
    \ni
    (v,f)
    \xmapsto{\quad}
    (1\otimes f)\rho v
    \in
    \cC(\bG)
  \end{equation*}
  maps weak$^*$-Cauchy nets into norm-Cauchy nets, the uniformity on the domain being that inherited from the weak$^*$-topology on $\cL(E)^*$ via the embedding $(v,f)\mapsto (T\mapsto fTv)$. 
\end{definition}

It is the latter condition that the paper compares and under appropriate conditions proves equivalent to spectrum finiteness. One such result, obtained in \Cref{th:if.lg.uij.ban}\Cref{item:th:if.lg.uij.ban:cls.pt} as a consequence of a more general principle having to do with a compact quantum group's representative functions' slow rate of decay, reads:

\begin{theoremN}\label{thn:coamnbl}
  Banach-space representations $\bG\circlearrowright E$ of coamenable compact quantum groups are $(E,\bG)$-uniform$_{\le 1}$ precisely when they have finitely many isotypic components.  \qedhere
\end{theoremN}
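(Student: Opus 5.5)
Both implications have to be handled; the easy one is finite spectrum $\Rightarrow$ $(E,\bG)$-uniformity$_{\le 1}$. If $\Spec\rho=\{\alpha_1,\dots,\alpha_n\}$ is finite, then $\rho$ takes values in $\cC_F\otimes E$, where $\cC_F\subset\cC(\bG)$ is the finite-dimensional span of the matrix coefficients of the $\alpha_i$. Choose a basis $u_k$ of $\cC_F$, write $\rho v=\sum_k u_k\otimes T_k v$ with $T_k\in\cL(E)$ (the $T_k$ are built from the isotypic projections $P^{\alpha}_\rho$ and the coordinate functionals). Then $(1\otimes f)\rho v=\sum_k f(T_kv)\,u_k$. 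Each coordinate $(v,f)\mapsto fT_kv$ is exactly the evaluation of the image of $(v,f)$ in $\cL(E)^*$ at the fixed operator $T_k$, so it is uniformly continuous for the weak$^*$ uniformity. Since all norms on $\cC_F$ are equivalent, weak$^*$-Cauchy nets therefore map to norm-Cauchy nets.

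For the converse I would argue by contraposition: assume $\Spec\rho$ is infinite and build a weak$^*$-Cauchy net whose image is not norm-Cauchy. For each $\alpha\in\Spec\rho$, pick unit vectors $v_\alpha\in P^\alpha_\rho E$ and $f_\alpha\in E^*$ (supported on the $\alpha$-component via $(P^\alpha_\rho)^*$, suitably normalized) such that $x_\alpha:=(1\otimes f_\alpha)\rho v_\alpha$ is a nonzero coefficient of $\alpha$ with $\|x_\alpha\|$ bounded below by some $c>0$. The pairs $(v_\alpha,f_\alpha)$ lie in the weak$^*$-compact ball of $\cL(E)^*$, so a subnet over distinct $\alpha$ converges weak$^*$, hence is weak$^*$-Cauchy. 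Its images $x_\alpha$ lie in pairwise distinct, mutually orthogonal (for the Haar state $h$) isotypic coefficient spaces. If they were norm-Cauchy they would converge in norm to some $x$. Orthogonality to each fixed finite coefficient space, together with faithfulness of $h$ on the dense Hopf algebra, forces every isotypic piece of $x$ to vanish, so $x=0$. That contradicts $\|x_\alpha\|\ge c$.

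The main obstacle is exactly this lower bound, and the way coamenability enters. Two points need care. First, the norm normalization: the natural bound coming from $P^\alpha_\rho$ may degrade with $\alpha$, since $\|P^\alpha_\rho\|$ can grow. Second, the final step needs $\|x\|$ controlled by something orthogonality sees, such as $h(x^*x)$, whereas for nonamenable $\bG$ the $L^2(h)$ norms of coefficients can be much smaller than their $C^*$-norms (the paper's "slow rate of decay" of representative functions). For the first point, I would pick $v_\alpha$, $f_\alpha$ so that the coefficient is large in sup-norm, using the counit: for coamenable $\bG$ the counit $\varepsilon$ is bounded on $\cC(\bG)$ and $(\varepsilon\otimes\id)\rho=\id$. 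This gives $\|x_\alpha\|\ge|\varepsilon(x_\alpha)|=|f_\alpha(v_\alpha)|$, which I make $\ge 1/2$ by Hahn–Banach with $f_\alpha$ norming $v_\alpha$. To keep the subnet legitimate, I would not compose with $P^\alpha_\rho$ at all: the isotypic separation only has to hold for $x_\alpha$ itself. That holds automatically, because $(1\otimes f)\rho$ applied to a vector of $P^\alpha_\rho E$ lies in the $\alpha$-coefficient space, whatever $f$ is. For the second point, the weak$^*$ limit argument for $x=0$ does not need $L^2$ bounds. A norm limit $x$ has $h$-pairings with each fixed coefficient space equal to the limit of the pairings with the $x_\alpha$, and these are eventually zero. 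So the whole burden falls on the counit estimate $\|x_\alpha\|\ge|f_\alpha(v_\alpha)|$, which is exactly what coamenability supplies.
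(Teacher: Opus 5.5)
Your argument is correct in substance and reaches the coamenable case more directly than the paper does.

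\textbf{The paper's route.} The paper deduces \Cref{thn:coamnbl} from \Cref{th:if.lg.uij.ban}\Cref{item:th:if.lg.uij.ban:temp.dec}, whose hypothesis is universal tempered decay: a uniform lower bound on $\|u^{\alpha}\|^{\wedge}$ over all Banach norms on $\bC^{\dim\alpha}$, computed in $\cC_r(\bG)$. It proceeds as follows:
\begin{enumerate}[(a)]
\item It carves a single irreducible copy $E'\le E_{\alpha}$ out of each isotypic component, using the matrix units $x^{\alpha}_{ij}=(\psi^{\alpha}_{ij}\otimes\id)\rho$.
\item It applies the decay estimate to the norm that $E'$ inherits from $E$, and extends $f_{\alpha}$ by Hahn--Banach.
\item It closes with the same Haar-orthogonality contradiction you use.
\end{enumerate}
Coamenability enters only when verifying tempered decay via $\varepsilon$.

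\textbf{Your route.} You skip both the intermediate condition and the extraction of $E'$. The comodule identity $(\varepsilon\otimes\id)\rho=\id$ on all of $\im P^{\alpha}_{\rho}$, together with a norming functional, gives $\varepsilon(x_{\alpha})=f_{\alpha}(v_{\alpha})=1$ outright.

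Your normalization is also the robust one. The paper's choice of norm-one $v_{\alpha}\in\bC e_i$ and $f_{\alpha}\in\bC e_i^*$ produces $u^{\alpha}_{ii}/(\|e_i\|\,\|e_i^*\|)$. Since $\|e_i\|\,\|e_i^*\|\ge 1$ can be large for an arbitrary norm, it is your norming-functional choice that verifies the tempered-decay bound for every norm.

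What the paper's route buys is generality. Tempered decay holds beyond coamenability, for instance for duals of non-amenable discrete groups, where the coefficients $\lambda_g$ have reduced norm $1$. It also yields \Cref{th:if.lg.uij.ban}\Cref{item:th:if.lg.uij.ban:if.in.tens}. Your explicit treatment of the easy direction matches \Cref{pr:rho.legs}\Cref{item:pr:rho.legs:2impl.in.im}, which is how the paper covers it implicitly.

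\textbf{A justification to correct.} The step `every isotypic piece of $x$ vanishes, so $x=0$' is not justified by faithfulness of $h$ on $\cO(\bG)$; that only kills the isotypic pieces. To pass from $h(y^*x)=0$ for all $y\in\cO(\bG)$ (hence $h(x^*x)=0$) to $x=0$, you need $h$ faithful on $\cC(\bG)$ itself, i.e.\ $\cC(\bG)=\cC_r(\bG)$. Coamenability does give this, so the proof stands.

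Your closing claim that the whole burden falls on the counit estimate is, however, wrong. On the universal completion $\cC_u(\bG)$ the counit is bounded for every $\bG$, so $\|x_{\alpha}\|\ge|f_{\alpha}(v_{\alpha})|$ comes for free there. Meanwhile every $x$ in the kernel of $\cC_u(\bG)\to\cC_r(\bG)$ is $h$-orthogonal to all of $\cO(\bG)$, and that kernel is nonzero precisely when $\bG$ is not coamenable. So coamenability is used twice: once for a bounded counit and once for reducedness. This is why the paper runs the final contradiction in $\cC_r(\bG)$ and stresses faithfulness of $h$ there.
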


Recall \cite[Definition 3.1]{bt} that \emph{coamenable} compact quantum groups are those for which the reduced function algebra $\cC_r(\bG)$ has a multiplicative state. Ordinary compact groups in particular being coamenable, \Cref{thn:coamnbl} will suffice to recover its aforementioned classical analogues. Although the statement is not valid in full generality for completely arbitrary quantum groups \cite[Example 1.10]{2603.12090v3}, there are sufficient conditions orthogonal to coamenability that will nevertheless ensure that $(E,\bG)$-uniformity$_{\le 1}$ is equivalent to spectrum finiteness (with the quantum-group class singled out in item \Cref{item:thn:bdd.dim:low} below having received some attention in the literature: \cite[\S 2.3]{dsv}, \cite{MR3871830}). 

\begin{theoremN}\label{thn:bdd.dim}
  \begin{enumerate}[(1),wide]
  \item\label{item:thn:bdd.dim:bdd.sets} Every dimension-bounded spectral subset of a $(E,\bG)$-uniform$_{\le 1}$ compact-quantum-group representation on a Banach space must be finite.
    
  \item\label{item:thn:bdd.dim:low} In particular, the $(E,\bG)$-uniform$_{\le 1}$ Banach-space representations of compact quantum groups with uniformly bounded irreducible representations are precisely those with finite spectrum. 
  \end{enumerate}
\end{theoremN}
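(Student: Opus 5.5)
Part (2) follows from part (1), since a finite spectrum is automatically dimension-bounded. For the converse direction of (2), finite spectrum means $E=\bigoplus_{\alpha\in F}P^\alpha_\rho E$ with $F$ finite. Each summand's matrix coefficients $(1\otimes f)\rho v$ then lie in the finite-dimensional coefficient space $\cC(\bG)_\alpha$ spanned by the $u^\alpha_{ij}$. So the map $(v,f)\mapsto(1\otimes f)\rho v$ factors through a weak$^*$-continuous linear map $\cL(E)^*\to\bigoplus_{\alpha\in F}\cC(\bG)_\alpha$, and this map is uniformly continuous into a finite-dimensional normed space. Concretely, the map is given by finitely many functionals $T\mapsto \text{coefficient of } u^\alpha_{ij}$. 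Each of these is of the form $\varphi\mapsto \varphi(T_{ij}^\alpha)$ for some $T^\alpha_{ij}\in\cL(E)$ built from $P^\alpha_\rho$ and the Haar state. This sends weak$^*$-Cauchy nets to Cauchy nets. The real content is therefore (1).

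For (1) I argue by contradiction. Suppose $S\subseteq\Spec\rho$ is infinite with $\dim\alpha\le d$ for all $\alpha\in S$, and pick distinct $\alpha_n\in S$. For each $n$, choose $v_n\in P^{\alpha_n}_\rho E$ and $f_n\in E^*$ of norm at most $1$ such that the coefficient $x_n:=(1\otimes f_n)\rho v_n\in\cC(\bG)_{\alpha_n}$ has norm bounded below by some $c>0$ depending only on $d$. To arrange this, I restrict to a copy of $\alpha_n$ inside the isotypic component; there $\rho$ acts via the unitary matrix $u^{\alpha_n}$ conjugated by some invertible $A_n$. Normalizing $v_n$ and choosing $f_n$ via Hahn–Banach, the coefficient is a combination $\sum_{ij} c_{ij}u^{\alpha_n}_{ij}$. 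Applying the counit-type evaluation (or using $\sum_j u_{ij}u_{ij}^*=1$) shows its norm is $\gtrsim|f_n(v_n)|/\sqrt d$ up to constants. Dimension-boundedness is exactly what keeps this lower bound uniform. A cleaner route is to use that $(\varepsilon\otimes\id)\rho=\id$ on the algebraic core, so that $\|x_n\|\ge|\varepsilon(x_n)|=|f_n(v_n)|$; however, $\varepsilon$ is unbounded on $\cC(\bG)$ in general, so I would instead use a Haar-state estimate $h(x_n^*x_n)\ge |f_n(v_n)|^2/(d\cdot\text{const})$ via the Schur orthogonality relations. The bound $\|x_n\|\ge h(x_n^*x_n)^{1/2}$ then gives the uniform lower bound.

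Next, by weak$^*$-compactness of the unit ball of $\cL(E)^*$, the bounded net $(v_n,f_n)$ has a weak$^*$-convergent subnet, and hence a weak$^*$-Cauchy subnet. By $(E,\bG)$-uniformity$_{\le1}$, the images $x_n$ along this subnet are norm-Cauchy, so they converge in norm to some $x\in\cC(\bG)$. But the $x_n$ lie in pairwise Haar-orthogonal coefficient spaces $\cC(\bG)_{\alpha_n}$ with the $\alpha_n$ distinct. The Haar inner product is norm-continuous, so $h(x^*x_n)\to h(x^*x)$ while also $h(x_n^*x_m)=0$ for $n\ne m$. Passing to the limit gives $h(x^*x)=\lim_m\lim_n h(x_m^*x_n)=0$, yet also $h(x^*x)=\lim h(x_n^*x_n)\ge c^2>0$. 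This is a contradiction; a subnet still indexes eventually distinct $\alpha$'s, so the double limit is legitimate.

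The main obstacle is the uniform lower bound $h(x_n^*x_n)\ge c(d)>0$. The Schur relations for non-Kac $\bG$ involve the $F$-matrices, which are not controlled by dimension alone. I would first try to exploit freedom in choosing $v_n,f_n$: pick $v_n$ as the image of a basis vector $e_i$ and $f_n$ dual to it after diagonalizing $F_{\alpha_n}$. Then $x_n=u^{\alpha_n}_{ii}$ up to the norms of $A_n,A_n^{-1}$. Choosing $i$ with the extremal eigenvalue of $F_{\alpha_n}$ should give $h(u_{ii}u_{ii}^*)\ge 1/\dim_q$-type bounds, and I would aim to replace these with a bound depending only on $d$ by using the normalization $\sum_j u_{ij}u_{ij}^*=1$, which forces some $\|u_{ij}\|\ge d^{-1/2}$. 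I also need to choose $v_n,f_n$ so that the conjugating norms $\|A_n\|\,\|A_n^{-1}\|$ are uniformly controlled. For this I would invoke the Banach–Mazur compactum: every $d'$-dimensional normed space is $\sqrt{d'}$-isomorphic to Hilbert space, which bounds these norms in terms of $d$.
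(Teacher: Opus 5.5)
There is a genuine gap, and it sits exactly at the step you flag as the main obstacle. Your endgame for (1) is sound: one coefficient $x_n\in\cO_{\alpha_n}$ per $\alpha_n$, Alaoglu plus uniformity$_{\le 1}$ to get norm convergence along a subnet, then the double Haar limit, which needs no faithfulness of $h$. This is essentially the Haar-orthogonality argument of Remark~\ref{res:rcvr.cls}(2). The paper's main proof instead makes all the $u^\alpha_{ij}$ a totally bounded family and argues with a unitary cluster point.

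What fails is the uniform lower bound on $h(x_n^*x_n)$. Bounding $\|A_n\|\,\|A_n^{-1}\|$ via Banach--Mazur/John is a non sequitur. John supplies \emph{some} Euclidean norm close to $\|\cdot\|_E$ on $E_{\alpha_n}$. But $A_n$ must carry the $\bG$-invariant inner product, the one making $\rho|_{E_{\alpha_n}}$ the unitary $u^{\alpha_n}$, and irreducibility pins that down up to scale, so you cannot take it to be John's.

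The condition number really can blow up. Take $\bG=\prod_k SU_{q_k}(2)$ with $q_k\to 0$. Let the $k$-th fundamental corepresentation $\begin{pmatrix}a_k&-q_kc_k^*\\ c_k&a_k^*\end{pmatrix}$ act on $(\bC^2,|D_k\,\cdot\,|_2)$ with $D_k=\mathrm{diag}(1,q_k^{1/2})$, and take the $c_0$-sum.
\begin{itemize}
\item Conjugating by $D_k$ turns the off-diagonal entries into $-q_k^{1/2}c_k^*$ and $q_k^{1/2}c_k$, so $\rho$ is bounded.
\item Each $E_{\alpha_k}$ is even isometric to $\ell^2_2$.
\item Nevertheless every intertwiner with a unitary model has condition number $q_k^{-1/2}$, and $c_k=u^{(k)}_{21}$ is the image of $(v,f)$ only when $\|v\|\,\|f\|\ge q_k^{-1/2}$.
\end{itemize}
The same example defeats the main proof's appeal to the finite diameter of $Q(d)$ to place all $u^\alpha_{ij}$, for some choice of orthonormal bases, in images of $C$-balls. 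So the paper does not supply this step either.

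Your fallback bound $h(x_n^*x_n)\gtrsim|f_n(v_n)|^2/d$ for norming pairs also fails. For $SU_q(2)$ one of the diagonal coefficients has $h(u_{ii}^*u_{ii})=q^2/(1+q^2)$.

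The repair is to make the choice scale-invariant, so that no conditioning bound is needed.
\begin{itemize}
\item Let $b_1,\dots,b_d$ be a basis of $E_\alpha$ orthonormal for the invariant inner product $\langle\cdot,\cdot\rangle_0$, with $\rho b_m=\sum_k u^\alpha_{km}\otimes b_k$.
\item Take $v\in E_\alpha$ with $\|v\|_E\le 1$ maximizing $|v|_0$, and set $R:=|v|_0$ and $f_k:=R^{-1}\langle b_k,\cdot\rangle_0$.
\item Then $|f_k(w)|\le R^{-1}|w|_0\le 1$ whenever $\|w\|_E\le 1$, so each $f_k$ extends by Hahn--Banach to an element of $E^*_{\le 1}$.
\item Put $x_k:=(\id\otimes f_k)\rho v=R^{-1}\sum_m v_m u^\alpha_{km}$. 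Since $u^{\alpha*}u^\alpha=1$, we get $\sum_k x_k^*x_k=R^{-2}\sum_m|v_m|^2=1$.
\end{itemize}
Hence $h(x_k^*x_k)\ge 1/d$ for some $k$, with no $F$-matrices and no Banach--Mazur input. Equivalently, within your own plan, diagonalize the invariant inner product simultaneously with John's rather than with $F_\alpha$. Then $\|b_i\|\,\|b_i^*\|$ is bounded in terms of $d$ for every $i$, and $\sum_i h(u^{\alpha*}_{ii}u^\alpha_{ii})=1$. With this in place your argument proves (1).

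A minor slip in (2): the hard direction follows from (1) because bounded irreducible dimensions make all of $\Spec\rho$ dimension-bounded, not because finite spectra are dimension-bounded. Your treatment of the converse direction is fine.
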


Although the boundedness assumption in \Cref{item:thn:bdd.dim:low} will of course not hold generally for classical compact groups $\bG$, \Cref{res:rcvr.cls}\Cref{item:res:rcvr.cls:cls.bdd.rep} notes that \Cref{thn:bdd.dim}\Cref{item:thn:bdd.dim:low} too can be employed in recovering the classical results that motivated the discussion to begin with.


\section{Controlled matrix-coefficient decay and its bearing on uniformity}\label{se:cntrl.dcy}

We assume some familiarity with basic compact-quantum-group formalism, as covered for instance in \cite[Chapter 1]{NeTu13} or \cite[\S 3]{kt_qg-surv-1} (with more specific references provided as needed). A few highlights:
\begin{itemize}[wide]
\item $u^{\alpha}=\left(u^{\alpha}_{ij}\right)_{i,j=1}^{d_{\alpha}:=\dim \alpha}\in \cC(\bG)\otimes M_{d_{\alpha}}$ are unitary elements parametrized by the irreducible representations $\alpha\in \Irr(\bG)$ \cite[Theorem 1.4.3]{NeTu13}, spanning the norm-dense \emph{Hopf $*$-algebra} \cite[Definition 1.6.1]{NeTu13} $\cO(\bG)\le \cC(\bG)$ of matrix coefficients.

\item Said irreducible representations are the (isomorphism classes of) simple $\cO(\bG)$-comodules.

\item $\cC(\bG)\xrightarrow{h=h_{\bG}}\bC$ is the \emph{Haar state} \cite[Theorem 1.2.1]{NeTu13} of $\bG$, analogous to a compact group's Haar probability measure, and faithful on the \emph{reduced version} $\cC_r(\bG)$ of $\cC(\bG)$ by the former's definition as the image of the GNS representation of $h$. 
\end{itemize}

\Cref{def:ban.adapt} builds on the notion of \emph{tempered decay} introduced in passing in the statement of \cite[Theorem 0.3]{2603.12090v3}. That concept is well suited for work in the unitary setup there relevant, and is adapted here to the operative broader Banach-space context.

\begin{definition}\label{def:ban.adapt}
  \begin{enumerate}[(1),wide]
  \item\label{item:def:ban.adapt:xy} Let $(E,\|-\|_E)$, $(X,\|-\|_X)$ and $(Y,\|-\|_Y)$ be Banach spaces with the latter two finite-dimensional. Writing
    \begin{equation*}
      \forall\left(X\xrightarrow[\quad\text{linear}\quad]{\quad u\quad}E\otimes Y\right)
      \ :\ 
      X\otimes Y^*
      \ni
      v\otimes f
      \xmapsto{\quad u_{X,Y}\quad}
      (\id\otimes f)uv
      \in E,
    \end{equation*}
    set 
    \begin{equation*}
      \left\|u\right\|_{X,Y}
      :=
      \left\|u_{X,Y}\right\|
      :\xlongequal[\text{multilinear-map norm}]{\quad\text{\cite[post Theorem A.3.35]{dales_autocont}}\quad}
      \sup_{\substack{f\in Y^*_{\le 1}\\v\in X_{\le 1}}}
      \|(\id\otimes f) u v\|_{E}.      
    \end{equation*}
    We refer to the quantity as the \emph{($X$,$Y$)-norm} of $u$. When $X$ and $Y$ coincide we abbreviate the phrase to \emph{$X$-norm} (and the notation to $u_X$ and $\|u\|_X:=\left\|u_X\right\|$, relying on context to distinguish between the two meanings of $\|-\|_X$). Plainly, $\|-\|_{\bullet}$ are invariant under isometries in the $\bullet$ argument(s).

  \item\label{item:def:ban.adapt:x.univ} If instead $X$ is only a linear space, the \emph{universal} version of $\|u\|_X$ is
    \begin{equation*}
      \|u\|^{\wedge}_X
      :=
      \inf_{(X,\|-\|_X)\in Q(\dim X)}\|u\|_X
    \end{equation*}
    where $Q(d)$ is the \emph{Banach-Mazur compactum} (\cite[post (37.2)]{tj_bm}, \cite[\S 2]{MR1793468}) parametrizing (isometry classes of) $d$-dimensional Banach spaces; recall that the \emph{Banach-Mazur distances}
    \begin{equation}\label{eq:dbm}
      d_{BM}\left((X,\|-\|_X),(Y,\|-\|_Y)\right)
      :=
      \log\inf\left\{\|T\|\cdot \|T^{-1}\|\ :\ X\xrightarrow[\ \text{linear bijection}\ ]{T}Y\right\}
    \end{equation}
    of \cite[\S 2]{MR1793468} (one for each $d$) indeed makes the $Q(\bullet)$ \emph{compacta}, i.e. compact metric spaces.
  \end{enumerate}
\end{definition}

\begin{remark}\label{re:xy.no.go}
  Note a small subtlety concerning \Cref{def:ban.adapt}\Cref{item:def:ban.adapt:x.univ}: the notion would not make sense in the $(X,Y)$ version, whereby one could vary the norms on $X$ and $Y$ independently. Simply scaling those norms would make the infimum identically 0. 
\end{remark}

The requisite language handy, the Banach-flavored \cite[Theorem 0.3]{2603.12090v3} is as follows. 

\begin{theorem}\label{th:if.lg.uij.ban}
  Let $E\xrightarrow{\rho} \cC(\bG)\otimes_{\varepsilon} E$ be a representation of a compact quantum group $\bG$ on a Banach space.
  \begin{enumerate}[(1),wide]
  \item\label{item:th:if.lg.uij.ban:temp.dec} If the Pontryagin dual $\Gamma:=\widehat{\bG}$ \emph{has universal tempered decay} in the sense that
    \begin{equation*}
      \exists\left(C>0\right)
      \forall\left(\alpha\in \Irr(\bG)\right)
      \left(
        \left\|\bC^{\dim\alpha}\xrightarrow{u^{\alpha}}\cC_r(\bG)\otimes \bC^{\dim\alpha}\right\|^{\wedge}_{\bC^{\dim\alpha}}>C
      \right)
    \end{equation*}
    then $\rho$ is $(E,\bG)$-uniform$_{\le 1}$ if and only if it has finite spectrum.
  \item\label{item:th:if.lg.uij.ban:cls.pt} In particular, said equivalence holds if $\bG$ is coamenable.

  \item\label{item:th:if.lg.uij.ban:if.in.tens} Assuming universal tempered decay, $\rho$ also has finite spectrum if and only if it lies in the image of the canonical map
    \begin{equation*}
      \cC(\bG)\otimes_{\varepsilon} \cL(E)
      \xrightarrow{\quad}
      \cL(E,\cC(\bG)\otimes_{\varepsilon}E).
    \end{equation*}
  \end{enumerate}  
\end{theorem}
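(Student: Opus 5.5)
The plan is to treat the two implications of \Cref{item:th:if.lg.uij.ban:temp.dec} separately, and then derive \Cref{item:th:if.lg.uij.ban:cls.pt} and \Cref{item:th:if.lg.uij.ban:if.in.tens} from them.

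\emph{Finite spectrum implies uniformity.} Suppose $\Spec\rho=\{\alpha_1,\dots,\alpha_k\}$. Then $E=\bigoplus_i P^{\alpha_i}_\rho E$. Each $\alpha$-isotypic component is a comodule over the finite-dimensional coalgebra $\mathrm{span}\{u^{\alpha}_{ij}\}$. Hence $\rho$ can be written as a finite sum
\begin{equation*}
  \rho=\sum_{\alpha,i,j}u^{\alpha}_{ij}\otimes T^{\alpha}_{ij},
  \qquad T^{\alpha}_{ij}\in\cL(E).
\end{equation*}
The $T^{\alpha}_{ij}$ are obtained by slicing $\rho\circ P^{\alpha}_\rho$ against the functionals dual to the linearly independent $u^{\alpha}_{ij}$. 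It follows that $(1\otimes f)\rho v=\sum u^{\alpha}_{ij}\, f(T^{\alpha}_{ij}v)$. Each scalar coefficient $f(T^{\alpha}_{ij}v)$ is a weak$^*$-uniformly continuous function of $(v,f)$ for the uniformity of \Cref{def:chiral.unif}. Since the range sits in a fixed finite-dimensional subspace of $\cC(\bG)$, weak$^*$-Cauchy nets go to norm-Cauchy nets. The same display also gives the forward direction of \Cref{item:th:if.lg.uij.ban:if.in.tens}.

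\emph{Infinite spectrum precludes uniformity.} This is the substantive direction. Pick distinct $\alpha_n\in\Spec\rho$ and, inside each isotypic component, a subcomodule $X_n\le E$ isomorphic to $\alpha_n$. Give $X_n$ the norm restricted from $E$ and transport it to $\bC^{d_{\alpha_n}}$ via a basis in which $\rho|_{X_n}$ reads as $u^{\alpha_n}$. Composing with the (contractive) reduction map $\cC(\bG)\to\cC_r(\bG)$, the tempered-decay hypothesis gives $\|u^{\alpha_n}\|_{X_n}>C$ computed in $\cC_r(\bG)$. So there are $v_n\in (X_n)_{\le 1}$ and $f_n\in X^*_{n,\le 1}$ with $\|(1\otimes f_n)\rho v_n\|_{\cC_r}>C$. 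Extend $f_n$ to $E$ by Hahn–Banach without increasing its norm. This does not change $(1\otimes f_n)\rho v_n$, because $\rho v_n\in\cC(\bG)\otimes X_n$. The pairs $(v_n,f_n)$ lie in a bounded, hence weak$^*$-compact, subset of $\cL(E)^*$, so some subnet is weak$^*$-convergent, hence weak$^*$-Cauchy.

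Suppose for contradiction that $x_n:=(1\otimes f_n)\rho v_n$ were norm-Cauchy along that subnet, with limit $x\in\cC(\bG)$. For each fixed $y\in\cO(\bG)$, Haar orthogonality of coefficients of distinct irreducibles gives $h(y^*x_n)=0$ eventually, since the indices $n$ go to infinity along the subnet. Therefore $h(y^*x)=0$ for all $y$ in the dense subspace $\cO(\bG)$, so $h(x^*x)=0$ and $x$ vanishes in $\cC_r(\bG)$. The reduced images of the $x_n$ would then tend to $0$, contradicting $\|x_n\|_{\cC_r}>C$. I expect this step to be the main obstacle: one must set up the passage to a Cauchy subnet and make sure the reduced-norm lower bound, rather than just the full-norm one, is what survives the limit.

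\emph{Items \Cref{item:th:if.lg.uij.ban:cls.pt} and \Cref{item:th:if.lg.uij.ban:if.in.tens}.} For \Cref{item:th:if.lg.uij.ban:cls.pt}, coamenability makes the counit $\varepsilon$ bounded on $\cC_r(\bG)$ with $\varepsilon(u^{\alpha})=I$. For any norm on $\bC^{d}$, choose unit vectors $v,f$ with $f(v)=1$. Then
\begin{equation*}
  \|(\id\otimes f)u^{\alpha}v\|\ge|\varepsilon((\id\otimes f)u^{\alpha}v)|=|f(v)|=1,
\end{equation*}
so universal tempered decay holds with $C=1/2$.

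For the backward direction of \Cref{item:th:if.lg.uij.ban:if.in.tens}, suppose $\rho$ comes from some element of $\cC(\bG)\otimes_{\varepsilon}\cL(E)$. Approximate that element in norm by finite tensors $\sum a_k\otimes T_k$. The associated maps $(v,f)\mapsto\sum a_k f(T_kv)$ are weak$^*$-uniformly continuous with finite-dimensional range, by the argument of the first paragraph. They converge to $(v,f)\mapsto(1\otimes f)\rho v$ uniformly on $E_{\le 1}\times E^*_{\le 1}$, and uniform limits preserve the property of sending Cauchy nets to Cauchy nets. Hence $\rho$ is $(E,\bG)$-uniform$_{\le1}$, and so has finite spectrum by \Cref{item:th:if.lg.uij.ban:temp.dec}.
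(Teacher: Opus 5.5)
Your proposal is correct and follows the paper's proof. The substantive direction of (1) is the same Hahn--Banach/Alaoglu argument, with the norm-Cauchy limit of the $x_n$ forced to vanish in $\cC_r(\bG)$ by Haar orthogonality and faithfulness of $h$. Item (3) is likewise the same combination of (1) with \Cref{pr:rho.legs}\Cref{item:pr:rho.legs:2impl.in.im}, which you simply reprove by finite-tensor approximation; you also spell out the easy finite-spectrum direction that the paper leaves implicit. For (2), your norming pair with $f(v)=1$ together with $\varepsilon(u^{\alpha})=I$ gives the bound $\ge 1$ for every norm on $\bC^{\dim\alpha}$. This is slightly more robust than the paper's choice $v\in\bC e_i$, $f\in\bC e_i^*$ in a fixed basis, which for an arbitrary norm yields $\left\|(\id\otimes f)u^{\alpha}v\right\|=1/(\|e_i\|\,\|e_i^*\|)$ rather than $1$.
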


There is a Banach analogue of \cite[Proposition 1.6]{2603.12090v3} applicable to representations $E\xrightarrow{\rho}\cC(\bG)\otimes_{\varepsilon}E$. \Cref{pr:rho.legs} functions quite broadly, for bounded maps
\begin{equation}\label{eq:rho.pqr}
  E_{p}
  \xrightarrow{\quad\rho=\tensor*[_{p}]{\rho}{_{q}_{r}}\quad}
  E_{q}\otimes_{\varepsilon}E_{r}
  ,\quad
  \text{$E_{\bullet}$ Banach spaces},
\end{equation}
with the indices intended to depict visually which Banach spaces appear on which side of $\rho$. This convention will be handy in depicting the other avatars of $\rho$ featuring in the statement:
\begin{equation*}
  \begin{aligned}
    E_{q}^*
    \ni
    \varphi
    &\xmapsto{\quad\tensor*[_{q}]{\rho}{_{p}_{r}}\quad}
      (\varphi\otimes \id)\rho
      \in\cL(E_{p},E_{r})\\
    E_{p}\times E_{r}^*
    \ni
    (v,\psi)
    &\xmapsto{\quad\tensor*[_{p}_{r}]{\rho}{_{q}}\quad}
      (\id\otimes \psi)\rho v
      \in E_{q}
  \end{aligned}  
\end{equation*}
say, or the analogous $\tensor*[_{r}]{\rho}{_{p}_{q}}$ and $\tensor*[_{p}_{q}]{\rho}{_{r}}$. Recall also \cite[pre \S 1]{MR603371} that a \emph{Cauchy-regular} map between \emph{uniform spaces} \cite[Definition 7.1]{james_unif_1999} is one preserving the Cauchy property for nets. 

\begin{remarks}\label{res:if.in.otimes.eps}
  \begin{enumerate}[(1),wide]
  \item\label{item:res:if.in.otimes.eps:expn} For $E_p:=\bC$ (or $\bR$ if working over the reals) \Cref{eq:rho.pqr} is nothing but an element $\rho\in E_q\otimes_{\varepsilon} E_r$; the notation can thus conveniently be abbreviated to double subscripts, as in
    \begin{equation*}
      E^*_{q}
      \xrightarrow{\quad\tensor*[_q]{\rho}{_r}\quad}
      E_r
      \quad\text{and}\quad
      E^*_{r}
      \xrightarrow{\quad\tensor*[_r]{\rho}{_q}\quad}
      E_q.
    \end{equation*}
    As $\otimes_{\varepsilon}$ is by its very definition the uniform norm restricted from the space of $\bC$-valued functions on the compact Hausdorff space $E^*_{q,\le 1}\times E^*_{r,\le 1}$ (weak$^*$-topologized factors), both maps just-displayed are boundedly weak$^*$-norm continuous: compact Hausdorff spaces are \emph{exponentiable} \cite[Proposition 7.1.5]{brcx_hndbk-2}, so that
    \begin{equation*}
      \begin{aligned}
        \forall
        &\left(Y\text{ compact $T_2$},\ X,Z\text{ topological}\right)\\
        \forall
        &\left(X\times Y\xrightarrow[\text{continuous}]{f}Z\right)
      \end{aligned}
      \left(X\ni x\xmapsto[\text{uniformly continuous}]{\quad}\left(y\xmapsto{\quad} f(x,y)\right)\in \cC(Y,Z)\right).
    \end{equation*}
    Cf. also \cite[post Proposition 2.1]{MR567834} noting the symmetry of that continuity for elements of the injective Banach tensor product. 

  \item The statement of \Cref{pr:rho.legs}\Cref{item:pr:rho.legs:2impl.in.im} references a natural map
    \begin{equation*}
      E\otimes_{\varepsilon} \cL(F,G)
      \ni
      (v,T)
      \xmapsto{\quad}
      \left(w\xmapsto{\quad}v\otimes Tw\right)
      \in      
      \cL(F,E\otimes_{\varepsilon}G)
      ,\quad
      E,F,G\text{ Banach}. 
    \end{equation*}
    That that map (ostensibly defined only on simple tensors) does extend as an isometry (not onto, generally) follows immediately from the selfsame characterization of the $\bullet\otimes_{\varepsilon}\square$ norm as that induced by the function space $\cC\left(\bullet^*_{\le 1}\times\square^*_{\le 1},\bC\right)$ recalled in \Cref{item:res:if.in.otimes.eps:expn}. 
  \end{enumerate}
\end{remarks}

\begin{proposition}\label{pr:rho.legs}
  Consider the following conditions on a continuous linear map \Cref{eq:rho.pqr} for Banach spaces $E_{\bullet}$.

  \begin{enumerate}[(a),wide]
  \item\label{item:pr:rho.legs:q.pr} $\tensor*[_{q}]{\rho}{_{p}_{r}}$ is weak$^*$-norm continuous on the unit ball $E_{q,\le 1}^*$.
  \item\label{item:pr:rho.legs:pr.q} $\tensor*[_{p}_{r}]{\rho}{_{q}}$ is weak$^*$-norm Cauchy-regular on $E_{p,\le 1}\times E_{r,\le 1}^*$, with the weak$^*$ uniformity induced by
    \begin{equation*}
      E_{p}\times E_{r}^*
      \ni
      (v,\psi)
      \xmapsto{\quad}
      \left(T\xmapsto{\quad}\psi Tv\right)
      \in
      \cL(E_p,E_r)^*.
    \end{equation*}
  \end{enumerate}

  \begin{enumerate}[(1),wide]
  \item\label{item:pr:rho.legs:1impl} The implication \Cref{item:pr:rho.legs:pr.q} $\Rightarrow$ \Cref{item:pr:rho.legs:q.pr} holds.

  \item\label{item:pr:rho.legs:2impl.in.im} Both conditions hold if $\rho$ belong to the image of the canonical map
    \begin{equation*}
      E_q\otimes_{\varepsilon} \cL(E_p,E_r)      
      \xrightarrow{\quad}
      \cL\left(E_p,E_q\otimes_{\varepsilon} E_r\right).
    \end{equation*}
  \end{enumerate}  
\end{proposition}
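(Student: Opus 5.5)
For \Cref{item:pr:rho.legs:1impl} I plan a contrapositive net argument. Suppose $\tensor*[_{q}]{\rho}{_{p}_{r}}$ fails weak$^*$-norm continuity at some $\varphi\in E^*_{q,\le 1}$. Then there are $\varepsilon>0$ and a net $\varphi_\lambda\to\varphi$ weak$^*$ in $E^*_{q,\le 1}$ with $\|(\varphi_\lambda-\varphi)\otimes\id)\rho\|>\varepsilon$ for every $\lambda$. The operator norm on $\cL(E_p,E_r)$ is a supremum over $E_{p,\le 1}\times E^*_{r,\le 1}$, so for each $\lambda$ I can pick $(v_\lambda,\psi_\lambda)$ in that set with
\begin{equation*}
  \left|(\varphi_\lambda-\varphi)\left(\tensor*[_{p}_{r}]{\rho}{_{q}}(v_\lambda,\psi_\lambda)\right)\right|>\varepsilon.
\end{equation*}
The weak$^*$ uniformity on $E_{p,\le 1}\times E^*_{r,\le 1}$ is the one induced from the bounded set $\cL(E_p,E_r)^*_{\le 1}$, which is weak$^*$-compact. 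Its induced uniformity is therefore precompact, so every net has a Cauchy subnet. Pass to a subnet along which $(v_\lambda,\psi_\lambda)$ is weak$^*$-Cauchy. By \Cref{item:pr:rho.legs:pr.q}, the images $w_\lambda:=\tensor*[_{p}_{r}]{\rho}{_{q}}(v_\lambda,\psi_\lambda)\in E_{q,\le\|\rho\|}$ then form a norm-Cauchy net, and hence converge in norm to some $w\in E_q$. Now write
\begin{equation*}
  (\varphi_\lambda-\varphi)(w_\lambda)=(\varphi_\lambda-\varphi)(w)+(\varphi_\lambda-\varphi)(w_\lambda-w).
\end{equation*}
The first term tends to $0$ by weak$^*$ convergence. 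The second is bounded by $2\|w_\lambda-w\|\to 0$. This contradicts the bound by $\varepsilon$.

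The one step needing care is the precompactness and subnet passage. The uniformity in question is the restriction of the weak$^*$ uniformity on the unit ball of $\cL(E_p,E_r)^*$, and the image of $(v,\psi)$ has norm at most $\|v\|\|\psi\|\le1$. A subset of a compact uniform space is precompact, and precompact uniform spaces are exactly those in which every net has a Cauchy subnet (an ultranet argument). So this step is standard. The continuity conclusion is global on the unit ball, which is what \Cref{item:pr:rho.legs:q.pr} asks.

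For \Cref{item:pr:rho.legs:2impl.in.im}, by \Cref{item:pr:rho.legs:1impl} it suffices to prove \Cref{item:pr:rho.legs:pr.q}, though \Cref{item:pr:rho.legs:q.pr} also follows directly from \Cref{res:if.in.otimes.eps}\Cref{item:res:if.in.otimes.eps:expn}. I first treat finite sums $\rho=\sum_i e_i\otimes T_i$. Then
\begin{equation*}
  \tensor*[_{p}_{r}]{\rho}{_{q}}(v,\psi)=\sum_i \psi(T_iv)\,e_i,
\end{equation*}
and each coefficient $\psi T_iv$ is the pairing of the image of $(v,\psi)$ in $\cL(E_p,E_r)^*$ with the fixed $T_i$. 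These coefficients are therefore uniformly continuous for the weak$^*$ uniformity, so Cauchy nets map to norm-Cauchy nets.

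For general $\rho$ in the image, I approximate it in the $\otimes_\varepsilon$ norm by finite sums $\rho_n$. The canonical map is isometric by \Cref{res:if.in.otimes.eps}, so
\begin{equation*}
  \sup_{(v,\psi)\in E_{p,\le1}\times E^*_{r,\le1}}\left\|\tensor*[_{p}_{r}]{\rho}{_{q}}(v,\psi)-\tensor*[_{p}_{r}]{(\rho_n)}{_{q}}(v,\psi)\right\|\le\|\rho-\rho_n\|\to 0.
\end{equation*}
A uniform limit of uniformly continuous maps into a metric space is uniformly continuous, which gives \Cref{item:pr:rho.legs:pr.q}. I expect the main obstacle, mild as it is, to be verifying that this uniform estimate really is $\|\rho-\rho_n\|_{\varepsilon}$ via the isometry.
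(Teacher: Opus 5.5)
Your proof is correct. Part (1) follows the paper's argument. The paper runs it only for nets $\varphi_\lambda\to 0$, which suffices by linearity after rescaling $(\varphi_\lambda-\varphi)/2$ into the unit ball, and it asserts the passage to a weak$^*$-Cauchy subnet without comment; your justification via Alaoglu and total boundedness of the pulled-back uniformity is the right one.

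Part (2) is where the routes differ. The paper calls it immediate from \Cref{res:if.in.otimes.eps}. Write $\rho$ as the image of some $\tilde\rho\in E_q\otimes_{\varepsilon}\cL(E_p,E_r)$. The function-space description of $\otimes_{\varepsilon}$, together with exponentiability of compact Hausdorff spaces, makes both slice maps of $\tilde\rho$ boundedly weak$^*$-norm continuous. The $E_q^*$-slice is condition (a). The $\cL(E_p,E_r)^*$-slice is continuous on the weak$^*$-compact ball, hence uniformly continuous there; composing it with $(v,\psi)\mapsto(T\mapsto\psi Tv)$ gives (b).

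You instead prove (b) by hand. For finite tensors the coefficients are pairings with fixed $T_i$, hence uniformly continuous, and uniform limits preserve uniform continuity; you then obtain (a) from (1). Your route avoids compactness and exponential-law considerations entirely and yields uniform continuity directly. In effect it reproves the relevant half of that remark by density. The paper's route is shorter once the remark is available, and it treats (a) and (b) symmetrically.

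The step you flagged as the main obstacle needs only contractivity, not isometry, and that is immediate. Indeed, $(\id\otimes\psi)(\rho-\rho_n)v$ is the slice of $\tilde\rho-\tilde\rho_n$ by the functional $T\mapsto\psi Tv$, whose norm is at most $\|v\|\,\|\psi\|\le 1$. So its norm is bounded by $\|\tilde\rho-\tilde\rho_n\|_{\varepsilon}$.
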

\begin{proof}
  \begin{enumerate}[label={},wide]

  \item\textbf{\Cref{item:pr:rho.legs:1impl}} Assume a net
    \begin{equation*}
      E^*_{q,\le 1}
      \ni
      \varphi_{\lambda}
      \xrightarrow[\quad\lambda\quad]{\quad\text{weak$^*$}\quad}
      0
      ,\quad
      \forall \lambda
      \bigg(
      \left\|\left(\varphi_{\lambda}\otimes \id\right)\rho\right\|> C>0
      \bigg)
      ,\quad
      \text{fixed $C$}.
    \end{equation*}
    This ensures the existence of $v_{\lambda}\in E_{p,\le 1}$ and $\psi_{\lambda}\in E^*_{r,\le 1}$ with
    \begin{equation}\label{eq:bdd.above.c}
      \forall\lambda
      \bigg(
      \left|
        \psi_{\lambda}
        \left(\varphi_{\lambda}\otimes \id\right)\rho
        v_{\lambda}
        =
        \left(\varphi_{\lambda}\otimes \psi_{\lambda}\right)\rho
        v_{\lambda}
      \right|
      >C
      \bigg).
    \end{equation}
    We can furthermore assume, upon passing to a subnet if necessary, that $\left(v_{\lambda},\psi_{\lambda}\right)_{\lambda}$ is weak$^*$-Cauchy. The regularity assumption then forces the norm Cauchy property on $\left(\left(\id\otimes \psi_{\lambda}\right)\rho v_{\lambda}\right)_{\lambda}$, whence that net's convergence to some $w\in E_q$. The boundedness of $\left(\varphi_{\lambda}\right)_{\lambda}$ and its weak$^*$ 0-convergence then jointly imply
    \begin{equation*}
      \left(\varphi_{\lambda}\otimes \psi_{\lambda}\right)\rho
      v_{\lambda}
      =
      \varphi_{\lambda}
      \left(\id\otimes \psi_{\lambda}\right)\rho
      v_{\lambda}
      \xrightarrow[\quad\lambda\quad]{\quad}
      0,
    \end{equation*}
    contradicting \Cref{eq:bdd.above.c}. 

  \item\textbf{\Cref{item:pr:rho.legs:2impl.in.im}}: Immediate from \Cref{res:if.in.otimes.eps}\Cref{item:res:if.in.otimes.eps:expn}.
  \end{enumerate}
\end{proof}

Note incidentally that \cite[Lemma 1.8]{2603.12090v3} too has a Banach-space variant, consequent on \Cref{pr:rho.legs} just as the former follows from \cite[Proposition 1.6]{2603.12090v3}. For a compact-quantum-group representation $E\xrightarrow{\rho}\cC(\bG)\otimes_{\varepsilon}E$ on a Banach space we define \emph{supports} for elements of $E$ and $E^*$ respectively by
\begin{equation*}
  \begin{aligned}
    \Irr(\bG)
    \supseteq 
    \supp v
    &:=
      \left\{
      \alpha\in \Irr(\bG)
      \ :\
      P^{\alpha}v\ne 0
      \right\}
      \quad\text{and}
    \\
    \Irr(\bG)
    \supseteq 
    \supp f
    &:=
      \bigcap\left\{\cF'\subseteq \Irr(\bG)\ :\ \forall\left(\alpha\in \Irr(\bG)\setminus \cF'\right)\left(f|_{\im P^{\alpha}}=0\right)\right\}.  
  \end{aligned}  
\end{equation*}

\begin{lemma}\label{le:ban.rep.legs}
  A representation $E\xrightarrow{\rho}\cC(\bG)\otimes_{\varepsilon}E$ of a compact quantum group on a Banach space is uniform$_{\le 1}$ if
  \begin{equation*}
    \left(\supp f\to\infty \wedge \supp w\to\infty\right)
    \ 
    \xRightarrow{\quad}
    \
    (\id\otimes f)\rho w
    \xrightarrow[\quad\text{in $\cC(\bG)$}\quad]{\quad\text{norm}\quad}
    0,
  \end{equation*}
  where $f\in E^*_{\le 1}$, $w\in E_{\le 1}$.  \qedhere
\end{lemma}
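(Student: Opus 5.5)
The plan is to verify condition \Cref{item:pr:rho.legs:pr.q} of \Cref{pr:rho.legs} for $\rho$ itself, i.e. $(E,\bG)$-uniformity$_{\le 1}$. \Cref{pr:rho.legs}\Cref{item:pr:rho.legs:1impl} then also delivers $(\bG,E)$-uniformity$_{\le 1}$. So we fix a weak$^*$-Cauchy net $(v_\lambda,f_\lambda)_\lambda$ in $E_{\le 1}\times E^*_{\le 1}$ and aim to show that $\left((\id\otimes f_\lambda)\rho v_\lambda\right)_\lambda$ is norm-Cauchy in $\cC(\bG)$. The strategy is to split off, for a finite set $\cF\subset\Irr(\bG)$, a ``head'' living on the $\cF$-isotypic components and a ``tail'' with supports outside $\cF$. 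The head is handled by the finite-dimensional (algebraic) structure of $\rho$ there; the tail is handled by the decay hypothesis.

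\textbf{Step 1: no cross terms.} Write $P_{\cF}:=\sum_{\alpha\in\cF}P^{\alpha}$. The isotypic projections are $\rho$-equivariant, $(\id\otimes P^\alpha)\rho=\rho P^\alpha$, so $(\id\otimes f)\rho P^\alpha w=(\id\otimes fP^\alpha)\rho w$. Expanding $v=P_{\cF}v+(1-P_{\cF})v$ and $f=fP_{\cF}+f(1-P_{\cF})$, the mixed terms vanish because $P_{\cF}(1-P_{\cF})=0$. This leaves
\begin{equation*}
  (\id\otimes f)\rho v
  =
  (\id\otimes fP_{\cF})\rho P_{\cF}v
  +
  (\id\otimes f(1-P_{\cF}))\rho (1-P_{\cF})v .
\end{equation*}
Here $\supp f(1-P_{\cF})$ and $\supp (1-P_{\cF})v$ both avoid $\cF$.

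\textbf{Step 2: the head is Cauchy.} Fix $\cF$ and restrict $\rho$ to the image $E_{\cF}:=\im P_{\cF}$. That image is a finite sum of isotypic components, so by the Peter--Weyl machinery recalled from \cite{Chirvasitu2026JNCG604} the coaction there is given by matrix coefficients of the $u^\alpha$, $\alpha\in\cF$. It is therefore an element of $\cO(\bG)_{\cF}\otimes\cL(E_{\cF})$, with $\cO(\bG)_{\cF}$ the finite-dimensional span of those coefficients. \Cref{pr:rho.legs}\Cref{item:pr:rho.legs:2impl.in.im} then makes the restricted map Cauchy-regular. Moreover, the assignment
\begin{equation*}
  (v,f)\xmapsto{\quad}(P_{\cF}v,\ fP_{\cF})
\end{equation*}
is weak$^*$-uniformly continuous for the uniformities of \Cref{def:chiral.unif}, since it is induced by $T\mapsto P_{\cF}TP_{\cF}$ on $\cL(E)$. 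It also maps bounded sets to bounded sets, namely into balls of radius $\|P_{\cF}\|$. Hence the head terms form a norm-Cauchy net for each fixed $\cF$.

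\textbf{Step 3: the tail is uniformly small (main obstacle).} By homogeneity, the hypothesis yields for each $\varepsilon>0$ a finite $\cF$ such that $\|(\id\otimes f)\rho w\|\le\varepsilon\|f\|\|w\|$ whenever both supports avoid $\cF$. Applied naively, this bounds the tail by $\varepsilon(1+\|P_{\cF}\|)^2$, and $\|P_{\cF}\|$ may grow as $\cF$ grows with $\varepsilon$. To get around this, I would first try arguing by contradiction along the net. If the images are not Cauchy, there are $\delta>0$ and pairs $\lambda_n,\mu_n$, pushed arbitrarily far out, with images $\delta$-apart. Step 2 applied to an increasing sequence of finite sets $\cF_n$ shows the head contributions of these pairs become close. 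So the discrepancy must be carried by elements whose supports escape every finite set. The hypothesis, applied directly to those escaping unit-ball elements rather than to the renormalised $(1-P_{\cF})$-truncations, should then force their contributions to $0$, giving the contradiction. Making this ``escape to infinity'' precise without picking up the factor $\|P_{\cF}\|$ is the step I expect to require the most care. One fallback is to note that $\|P_{\cF}\|$ is already fixed once $\cF$ is chosen, and to choose the tail threshold only afterwards in a diagonal fashion. This works if the decay is uniform over all of $\Irr(\bG)\setminus\cF$.
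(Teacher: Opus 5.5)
Your Steps 1 and 2 are correct. The overall route is to verify condition \Cref{item:pr:rho.legs:pr.q} by a head/tail split at a finite $\cF$ and then obtain the other uniformity from \Cref{pr:rho.legs}\Cref{item:pr:rho.legs:1impl}. That is what the paper's one-line justification points to: it transplants the unitary Lemma 1.8 of \cite{2603.12090v3} and gives no further argument. Step 3, however, is a genuine gap, not a matter of care. The split needs some finite $\cF$ for which
\begin{equation*}
  \tau_{\cF}:=\sup\left\{\left\|(\id\otimes f)\rho(1-P_{\cF})v\right\|\ :\ f\in E^*_{\le 1},\ v\in E_{\le 1}\right\}
\end{equation*}
is small. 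The hypothesis only says that $\varepsilon_{\cF}$ is small, where $\varepsilon_{\cF}$ is the supremum of $\|(\id\otimes f)\rho w\|$ over $f\in E^*_{\le 1}$, $w\in E_{\le 1}$ with $fP_{\cF}=0=P_{\cF}w$. The only available comparison is $\tau_{\cF}\le\|1-P_{\cF}\|^2\varepsilon_{\cF}$. In Banach representations $\|P_{\cF}\|$ is in general unbounded as $\cF$ grows. Already for translation on $\cC(\mathbb{T})$ with $\cF=\{-N,\dots,N\}$ it equals the Lebesgue constant, of order $\log N$. In the unitary setting the $P_{\cF}$ are orthogonal projections of norm $1$, and that is what makes the same split close there.

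Neither of your remedies gets around this.
\begin{itemize}
\item \textbf{Contradiction argument.} The elements whose supports escape are exactly the truncations $(1-P_{\cF_n})v_{\lambda_n}$ and $f_{\lambda_n}(1-P_{\cF_n})$. These lie only in balls of radius $\|1-P_{\cF_n}\|$. The unit-ball elements $v_\lambda$, $f_\lambda$ need not have escaping supports at all (a constant net is weak$^*$-Cauchy). So there is nothing to which the hypothesis can be applied ``directly'', and renormalising brings the factor back.
\item \textbf{Diagonal fallback.} This is circular. After splitting at $\cF$, the tail is supported only off $\cF$, so the only decay constant you may use is $\varepsilon_{\cF}$. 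Passing to a larger $\cF'$ for a better constant changes the split and imports $\|P_{\cF'}\|$. Uniform decay over $\Irr(\bG)\setminus\cF$ is already what the hypothesis provides, so that is not the missing ingredient.
\item \textbf{Targeting condition (a) directly.} This does not help either. In the argument of \Cref{pr:rho.legs}\Cref{item:pr:rho.legs:1impl} one has $(\varphi_\lambda\otimes\id)\rho P_{\cF}\to 0$ in norm. But bounding $(\varphi_\lambda\otimes\id)\rho(1-P_{\cF})$ again requires $\tau_{\cF}$.
\end{itemize}
The proof would close with extra input such as $\inf_{\cF}\|1-P_{\cF}\|^2\varepsilon_{\cF}=0$, for example $\|P_{\cF}\|$ bounded along a cofinal family of finite sets. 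Alternatively, the hypothesis could be phrased for truncations of unit vectors, i.e.\ $\inf_{\cF}\tau_{\cF}=0$. With either, your Steps 1--3 finish the argument. The paper's pointer to \Cref{pr:rho.legs} does not address this point, so it supplies no missing idea you could borrow.
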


\pf{th:if.lg.uij.ban}
\begin{th:if.lg.uij.ban}
  \begin{enumerate}[label={},wide]
  \item\textbf{\Cref{item:th:if.lg.uij.ban:temp.dec}} Assume infinitely many spectral projections $P^{\alpha}$ of $\rho$ non-zero.  This then provides non-vanishing ``Fourier coefficients''
    \begin{equation*}
      x^{\alpha}_{ij}
      :=
      \left(\psi^{\alpha}_{ij}\otimes \id \right)\rho\in \cL(E)
      ,\quad
      1\le i,j\le d_{\alpha}:=\dim\alpha
      ,\quad
      \left(u^{\beta}_{k\ell}\xmapsto{\ \psi^{\alpha}_{ij}\ }\delta_{\alpha\beta}\delta_{ik}\delta_{j\ell}\right)
      \in \cC(\bG)^*.
    \end{equation*}
    Fixing an $\alpha$ for the moment and focusing on $x_{ij}:=x^{\alpha}_{ij}$, said operators act as matrix units in the sense that $x_{ij}x_{k\ell}=\delta_{jk}x_{i\ell}$; they thus operate as the usual rank-1 matrix units on
    \begin{equation*}
      E':=
      \bigoplus_{i=1}^{d_{\alpha}}\bC e_i
      \le \left(\text{isotypic component }E_{\alpha}\right)
      \le E
      ,\quad
      e_1\in \im x_{11}\text{ arbitrary and }
      e_j:=x_{j1}e_1.
    \end{equation*}
    Identify
    \begin{equation*}
      \sum_{i,j} u^{\alpha}_{ij}\otimes x_{ij}
      \in
      \cC(\bG)\otimes \cL\left(E'\right)
      \cong
      \cL\left(E',\cC(\bG)\otimes E'\right)
    \end{equation*}
    with $u^{\alpha}\in \cC(\bG)\otimes \cL\left(\bC^{d_{\alpha}}\right)$. The $d_{\alpha}$-space $E'$ comes equipped with its Banach structure via $E'\le E$, and the tempered-decay estimate now ensures the existence of norm-$(\le 1)$
    \begin{equation*}
      f_{\alpha}\in E'^*_{\le 1}
      ,\quad
      w_{\alpha}\in E'_{\le 1}
      ,\quad
      \left\|\left(\id\otimes f_{\alpha}\right)\rho w_{\alpha}\right\|>C
    \end{equation*}
    which may as well be regarded as members of $E^*_{\alpha,\le 1}$ (by extension via Hahn-Banach) and $E_{\alpha,\le 1}$ respectively. Now
    \begin{itemize}[wide]
    \item the $x_{\alpha}:=\left(\id\otimes f_{\alpha}\right)\rho w_{\alpha}$ will cluster at some norm-$(\ge C)$ $x\in \cC(\bG)$ by Alaoglu and the assumed $(E,\bG)$-uniformity$_{\le 1}$;
    \item while on the other hand $x_\alpha\in \sum_{ij}\bC u^{\alpha}_{ij}$ and hence $h(x^*_{\alpha}x_{\alpha'})=0$ for $\alpha\ne \alpha'$ \cite[Theorem 1.4.3]{NeTu13}; the Haar state $h$ being faithful on $\cC_r(\bG)$ \cite[Corollary 1.7.5]{NeTu13}, we have a contradiction. 
    \end{itemize}

  \item\textbf{\Cref{item:th:if.lg.uij.ban:cls.pt}} The state $\varepsilon$ assigns value 1 to all diagonal matrix coefficients $u^{\alpha}_{ii}$, so these all have norm $\ge 1$ in $\cC_r(\bG)$ (exactly 1 in fact, given that $\sum_j u^{\alpha *}_{ij} u^{\alpha}_{ij}=1$). Thus:
    \begin{equation*}
      \left\|(\id\otimes f_{\alpha})u^{\alpha}v_{\alpha}\right\|=1
      ,\quad
      \forall
      \left(
        \begin{gathered}
          v_{\alpha}\in \bC e_i\le \bC^{\dim\alpha},\ \|v_{\alpha}\|=1\\
          f_{\alpha}\in \bC e_i^*\le \left(\bC^{\dim\alpha}\right)^*,\ \|f_{\alpha}\|=1
        \end{gathered}
      \right)
    \end{equation*}
    for a basis $\left(e_j\right)_{j=1}^{\dim\alpha}$ compatible with the matrix units $u^{\alpha}_{jk}$ and any Banach-space structure on $\bC^{\dim\alpha}$ (with the corresponding dual norm on the dual space $\left(\bC^{\dim\alpha}\right)^*$), hence the hypothesis of the just-proven part \Cref{item:th:if.lg.uij.ban:temp.dec} and the conclusion.

  \item\textbf{\Cref{item:th:if.lg.uij.ban:if.in.tens}} follows from \Cref{item:th:if.lg.uij.ban:temp.dec} and \Cref{pr:rho.legs}\Cref{item:pr:rho.legs:2impl.in.im}.  \qedhere
  \end{enumerate}
\end{th:if.lg.uij.ban}

The preceding proof makes clear precisely how \Cref{th:if.lg.uij.ban} relies on working with the \emph{reduced} version $\cC_r(\bG)$: the faithfulness of the Haar state is invoked. One way to dispense with that constraint is to isolate the precise large-norm condition that will function on arbitrary $\cC(\bG)$.

\begin{definition}\label{def:dstl}
  A discrete quantum group $\Gamma=\widehat{\bG}$ is \emph{$\cC(\bG)$-distal}\footnote{In terminology borrowed from the dynamical-systems literature: \cite[p.401]{zbMATH03151299}, \cite[p.732]{zbMATH03595892}, etc.: the phrase is meant to convey failure to cluster.} if for every net $\left(\alpha_{\lambda}\right)_{\lambda}\subset\Irr(\bG)$ eventually leaving every finite subset of $\Irr(\bG)$ and arbitrary Banach-space structures on the carrier spaces $V_{\alpha_{\lambda}}$ of the respective $\bG$-representations there are
  \begin{equation*}
    v_{\alpha_{\lambda}}\in V_{\alpha_{\lambda},\le 1}
    \quad\text{and}\quad
    f_{\alpha_{\lambda}}\in V^*_{\alpha_{\lambda},\le 1}
  \end{equation*}
  with $\left(\left(\id\otimes f_{\alpha_{\lambda}}\right) u^{\alpha_{\lambda}} v_{\alpha_{\lambda}}\right)_{\lambda}\subset \cC(\bG)$ having no (norm-)Cauchy subnets. 
\end{definition}

The proof of \Cref{th:if.lg.uij.ban} then in fact adapts to yield the following sufficient condition that will ensure finite-spectrum/uniformity$_{\le 1}$ equivalence.

\begin{theorem}\label{th:g.dist}
  If $\bG$ is a compact quantum group with $\cC(\bG)$-distal dual a Banach-space representation $E\xrightarrow{\rho}\cC(\bG)\otimes_{\varepsilon} E$ is uniform$_{\le 1}$ if and only if it has finite spectrum.  \qedhere
\end{theorem}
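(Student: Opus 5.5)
Both directions will be handled by adapting the proof of \Cref{th:if.lg.uij.ban}\Cref{item:th:if.lg.uij.ban:temp.dec}, with $\cC(\bG)$-distality (\Cref{def:dstl}) standing in for tempered decay and Haar-state orthogonality.

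\textbf{Finite spectrum implies uniformity$_{\le 1}$.} Suppose $\Spec\rho$ is finite. Then $\rho$ lies in the algebraic span of finitely many $\sum_{i,j}u^{\alpha}_{ij}\otimes x^{\alpha}_{ij}$ with $x^{\alpha}_{ij}\in\cL(E)$. This span sits inside the image of $\cC(\bG)\otimes_{\varepsilon}\cL(E)\to\cL(E,\cC(\bG)\otimes_{\varepsilon}E)$. \Cref{pr:rho.legs}\Cref{item:pr:rho.legs:2impl.in.im} then yields the Cauchy-regularity of $(v,f)\mapsto(\id\otimes f)\rho v$ on $E_{\le 1}\times E^*_{\le 1}$, which is exactly $(E,\bG)$-uniformity$_{\le 1}$. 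One can also see this directly: the map is a finite sum of terms $u^\alpha_{ij}\cdot f(x^\alpha_{ij}v)$, and each scalar $f(x^\alpha_{ij}v)$ is a coordinate of the weak$^*$ uniformity on $\cL(E)^*$.

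\textbf{Uniformity$_{\le 1}$ implies finite spectrum.} Suppose $\Spec\rho$ is infinite, and index it as a net (say a sequence, or the directed set of cofinite subsets) $(\alpha_\lambda)$ eventually leaving every finite subset of $\Irr(\bG)$. For each $\alpha=\alpha_\lambda$, build $E'_\alpha=\bigoplus_i\bC e_i\le E_\alpha$ from the matrix units $x^\alpha_{ij}$ exactly as in the proof of \Cref{th:if.lg.uij.ban}. The key identification is that, under $e_i\leftrightarrow$ the standard basis of $\bC^{d_\alpha}=V_\alpha$, the restriction of $\rho$ to $E'_\alpha$ is $u^\alpha$ (up to the transpose/convention choice already made there), i.e. $\rho|_{E'_\alpha}=\sum_{ij}u^\alpha_{ij}\otimes x^\alpha_{ij}|_{E'_\alpha}$. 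Transport the norm of $E'_\alpha\le E$ to $V_\alpha$. Distality then supplies $v_\lambda\in E'_{\alpha_\lambda,\le1}$ and $f_\lambda\in E'^*_{\alpha_\lambda,\le1}$ such that $\bigl((\id\otimes f_\lambda)\rho v_\lambda\bigr)_\lambda$ has no norm-Cauchy subnet. Extend $f_\lambda$ by Hahn–Banach to $E^*_{\le1}$ without changing the value of $(\id\otimes f_\lambda)\rho v_\lambda$, since $\rho v_\lambda\in\cC(\bG)\otimes E'_{\alpha_\lambda}$.

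By Alaoglu, the bounded net $(v_\lambda,f_\lambda)$, viewed in $\cL(E)^*_{\le1}$, has a weak$^*$-convergent and hence weak$^*$-Cauchy subnet. Uniformity$_{\le1}$ makes the image of that subnet norm-Cauchy in $\cC(\bG)$. This contradicts the choice of the $(v_\lambda,f_\lambda)$, so the spectrum is finite.

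\textbf{Main obstacle.} The delicate step is matching subnets with \Cref{def:dstl}. The definition asserts that the image net has no Cauchy subnets, but the subnet produced by Alaoglu is a subnet of $(\alpha_\lambda)$ with its own directed set. I will check that any subnet of $(\alpha_\lambda)$ still leaves finite sets eventually. Since \Cref{def:dstl} speaks of the chosen vectors' image net having no Cauchy subnets at all, and a subnet of the $(v,f)$-net maps to a subnet of the image net, the contradiction goes through. A secondary check is that the Banach structure transported to $V_\alpha$ is legitimate, as the definition quantifies over arbitrary structures, and that the transpose convention does not matter. On the latter point I would note that distality applied to the dual representation, or swapping the roles of $v$ and $f$, covers either convention.
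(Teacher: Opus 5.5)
Your proposal is correct and follows the paper's route. The paper proves this theorem only by remarking that the proof of \Cref{th:if.lg.uij.ban}\Cref{item:th:if.lg.uij.ban:temp.dec} adapts, with $\cC(\bG)$-distality replacing the tempered-decay and Haar-orthogonality contradiction, which is what you do: a copy $E'_\alpha$ of $\alpha$ with the transported norm, a Hahn--Banach extension, an Alaoglu subnet whose image is a norm-Cauchy subnet. Your easy direction via \Cref{pr:rho.legs}\Cref{item:pr:rho.legs:2impl.in.im} is the paper's implicit one. The subnet issue you flag is harmless, since distality is applied once to the original net of distinct spectral elements, and the image of a subnet is a subnet of the image net.
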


We turn next to the Introduction's second narrative branch.

\pf{thn:bdd.dim}
\begin{thn:bdd.dim}
  That \Cref{item:thn:bdd.dim:bdd.sets} implies \Cref{item:thn:bdd.dim:low} is immediate, so only the former need detain us. Fix, to that end, a Banach-space representation $E\xrightarrow{\rho} \cC(\bG)\otimes_{\varepsilon} E$ and a spectral subset $\cF\subseteq \Spec(\rho)$ thereof, dimension-bounded in the sense that
  \begin{equation*}
    \sup\left\{d_{\alpha}:=\dim {\alpha}\ :\ \alpha\in \cF\right\}<\infty.
  \end{equation*}
  The goal being to prove $\cF$ finite (assuming uniformity$_{\le 1}$), one may as well further assume all $d_{\alpha}$ equal to a common $d\in \bZ_{\ge 0}$.

  Consider now, for each $\alpha\in \cF$, a copy of $\alpha$ in $\rho$, supported on the $d$-dimensional Banach space $E_{\alpha}\le E$. $\rho$ then (co)restricts to $E_{\alpha}\xrightarrow{\rho_{\alpha}} \cC(\bG)\otimes E_{\alpha}$, and the finite diameter of the Banach-Mazur compactum $Q(d)$ under the Banach-Mazur distance of \Cref{eq:dbm} ensures the existence of $C>0$ and orthonormal bases for $\alpha\in \cF$ which yield
  \begin{equation*}
    \forall\left(\alpha\in \cF,\ 1\le i,j\le d=d_{\alpha}\right)
    \left(u^{\alpha}_{ij}\in \im\left(E_{\alpha,\le C}\times E^*_{\alpha,\le C}\xrightarrow{(\id\otimes\bullet)\rho_{\alpha}\bullet}\cC(\bG)\right)\right).
  \end{equation*}
  The bounded weak$^*$-norm Cauchy regularity of $(\id\otimes \bullet)\rho\bullet$ provided by uniformity$_{\le 1}$ in conjunction with \Cref{pr:rho.legs} thus ensures the total norm boundedness of the family $\left\{u^{\alpha}_{ij}\ :\ \alpha\in \cF,\ 1\le i,j\le d_{\alpha}\right\}$ of matrix coefficients. We can then assume $u^{\alpha}_{ij}$ simultaneously cluster with varying $\alpha$, uniformly in $1\le i,j\le d$, to matrix coefficients $u_{ij}$ of a matrix subcoalgebra $C\le \cC(\bG)$:
  \begin{equation*}
    u_{ij}
    \xmapsto{\quad\Delta\quad}
    \sum_{k=1}^d u_{ik}\otimes u_{k j}
    ,\quad
    \text{unitary }
    u:=(u_{ij})_{i,j=1}^d\in \cC(\bG)\otimes M_d. 
  \end{equation*}
  This suffices to conclude that $u$ is one of the $u^{\alpha}$; this being so for \emph{arbitrary} cluster points of $\left\{u^{\alpha}\right\}_{\alpha\in \cF}$, $\cF$ must be finite. 
\end{thn:bdd.dim}

\begin{remarks}\label{res:rcvr.cls}
  \begin{enumerate}[(1),wide]
  \item\label{item:res:rcvr.cls:cls.bdd.rep} For classical $\bG$ the dimension bounding required by \Cref{thn:bdd.dim}\Cref{item:thn:bdd.dim:low} holds precisely \cite[Theorem 1]{MR302817} when $\bG$ is \emph{virtually abelian} (i.e. has a finite-index abelian subgroup). Classical uniformity/spectrum finiteness equivalence can nevertheless be recovered from said item \Cref{item:thn:bdd.dim:low}:
    \begin{itemize}[wide]
    \item a norm-continuous Banach-space representation $\bG\to GL(E)$ has compact image, which is thus \cite[Theorem 9.3.14]{2602.12362v1} a (finite-dimensional) compact Lie subgroup.
      
    \item Having reduced the problem to compact Lie groups, standard weight theory \cite[Theorem VI.2.10]{btd_lie_1995} further distills it to its version for tori (so abelian groups). 
    \end{itemize}
    In reference to reducing norm continuity to simpler classes of groups (abelian, profinite, etc.) see also the multiple (classical) characterizations of norm continuity in \cite[Theorem 3.10]{Chirvasitu2026JNCG604}.

  \item\label{item:res:rcvr.cls:alt.pf.nobm} There is (at least) one alternative approach to proving \Cref{thn:bdd.dim}\Cref{item:thn:bdd.dim:bdd.sets} that does not entail invoking Banach-Mazur compactness and any attendant norm estimates.

    Observe first that (per the proof of \Cref{th:if.lg.uij.ban}\Cref{item:th:if.lg.uij.ban:cls.pt}) the diagonal matrix coefficients $u^{\alpha}_{ii}$, $\alpha\in \Spec\rho$ are in any case contained in the image of $(\id\otimes\bullet)\rho\bullet$ and hence constitute a totally norm-bounded family by uniformity$_{\le 1}$, regardless of any dimension-bounding constraints. With the additional assumption that
    \begin{equation*}
      \forall\left(\alpha\in \cF\subseteq \Irr(\bG)\right)
      \left(\dim\alpha=d\right)
    \end{equation*}
    for a fixed on-negative integer $d$, one can further suppose (\cite[Theorem 1.4.3(i)]{NeTu13}, for instance) that for some fixed $1\le i\le d$ all $h\left(u^{\alpha*}_{ii}u^{\alpha}_{ii}\right)$, $\alpha\in \cF$ dominate $\frac 1d$. Since on the other hand $u^{\alpha,\alpha'}_{ii}$ are $h$-orthogonal \cite[Theorem 1.4.3(ii)]{NeTu13} for distinct $\alpha,\alpha'\in \cF$, this contradicts said coefficients' clustering even in the $L^2$ norm induced by $h$, let alone norm-wise.
  \end{enumerate}
\end{remarks}


\addcontentsline{toc}{section}{References}

\begin{thebibliography}{10}

\bibitem{MR1793468}
Sergei~M. Ageev and Du{\v{s}}an Repov{\v{s}}.
\newblock On {B}anach-{M}azur compacta.
\newblock {\em J. Austral. Math. Soc. Ser. A}, 69(3):316--335, 2000.

\bibitem{zbMATH03595892}
Joseph Auslander and Shmuel Glasner.
\newblock Distal and highly proximal extensions of minimal flows.
\newblock {\em Indiana Univ. Math. J.}, 26:731--749, 1977.

\bibitem{bt}
E.~B\'{e}dos and L.~Tuset.
\newblock Amenability and co-amenability for locally compact quantum groups.
\newblock {\em Internat. J. Math.}, 14(8):865--884, 2003.

\bibitem{brcx_hndbk-2}
Francis Borceux.
\newblock {\em Handbook of categorical algebra. 2: {Categories} and
  structures}, volume~51 of {\em Encycl. Math. Appl.}
\newblock Cambridge: Univ. Press, 1994.

\bibitem{btd_lie_1995}
Theodor Br{\"o}cker and Tammo tom Dieck.
\newblock {\em Representations of compact {Lie} groups. {Corrected} reprint of
  the 1985 orig}, volume~98 of {\em Grad. Texts Math.}
\newblock New York, NY: Springer, corrected reprint of the 1985 orig. edition,
  1995.

\bibitem{Chirvasitu2026JNCG604}
Alexandru Chirv{\u{a}}situ.
\newblock {(Quantum) discreteness, spectrum compactness and uniform
  continuity}.
\newblock {\em {Journal of Noncommutative Geometry}}, 20(1):69--94, 2026.

\bibitem{2603.12090v3}
Alexandru Chirvasitu.
\newblock Spectral finiteness, quantum norm continuity and classical points,
  2026.
\newblock \url{http://arxiv.org/abs/2603.12090v3}.

\bibitem{dales_autocont}
H.~G. Dales.
\newblock {\em Banach algebras and automatic continuity}, volume~24 of {\em
  London Mathematical Society Monographs. New Series}.
\newblock The Clarendon Press, Oxford University Press, New York, 2000.
\newblock Oxford Science Publications.

\bibitem{dsv}
Matthew Daws, Adam Skalski, and Ami Viselter.
\newblock Around property ({T}) for quantum groups.
\newblock {\em Comm. Math. Phys.}, 353(1):69--118, 2017.

\bibitem{zbMATH03151299}
Robert Ellis.
\newblock Distal transformation groups.
\newblock {\em Pac. J. Math.}, 8:401--405, 1958.

\bibitem{2602.12362v1}
Helge Gloeckner and Karl-Hermann Neeb.
\newblock {Infinite-Dimensional} {Lie} {Groups}, 2026.
\newblock \url{https://arxiv.org/abs/2602.12362v1}.

\bibitem{hm5}
Karl~H. Hofmann and Sidney~A. Morris.
\newblock {\em The structure of compact groups. {A} primer for the student. {A}
  handbook for the expert}, volume~25 of {\em De Gruyter Stud. Math.}
\newblock Berlin: De Gruyter, 5th edition edition, 2023.

\bibitem{james_unif_1999}
Ioan James.
\newblock {\em Topologies and uniformities. {Exp}. and rev. version of
  {Topological} and uniform spaces, 1987}.
\newblock Springer Undergrad. Math. Ser. London: Springer, exp. and rev.
  version of {Toplogical} and uniform spaces, 1987 edition, 1999.

\bibitem{klm_unif}
R.~R. Kallman.
\newblock A characterization of uniformly continuous unitary representations of
  connected locally compact groups.
\newblock {\em Mich. Math. J.}, 16:257--263, 1969.

\bibitem{MR3871830}
Jacek Krajczok and Piotr~M. So\l~tan.
\newblock Compact quantum groups with representations of bounded degree.
\newblock {\em J. Operator Theory}, 80(2):415--428, 2018.

\bibitem{kt_qg-surv-1}
Johan Kustermans and Lars Tuset.
\newblock A survey of {$C^*$}-algebraic quantum groups. {I}.
\newblock {\em Irish Math. Soc. Bull.}, 43:8--63, 1999.

\bibitem{MR302817}
Calvin~C. Moore.
\newblock Groups with finite dimensional irreducible representations.
\newblock {\em Trans. Amer. Math. Soc.}, 166:401--410, 1972.

\bibitem{NeTu13}
Sergey Neshveyev and Lars Tuset.
\newblock {\em Compact quantum groups and their representation categories},
  volume~20 of {\em Cours Sp\'ecialis\'es [Specialized Courses]}.
\newblock Soci\'et\'e Math\'ematique de France, Paris, 2013.

\bibitem{podl_symm}
Piotr Podle\'{s}.
\newblock Symmetries of quantum spaces. {S}ubgroups and quotient spaces of
  quantum {${\rm SU}(2)$} and {${\rm SO}(3)$} groups.
\newblock {\em Comm. Math. Phys.}, 170(1):1--20, 1995.

\bibitem{zbMATH05628052}
A.~I. Shtern.
\newblock Norm continuous representations of locally compact groups.
\newblock {\em Russ. J. Math. Phys.}, 15(4):552--553, 2008.

\bibitem{MR603371}
Ray~F. Snipes.
\newblock Cauchy-regular functions.
\newblock {\em J. Math. Anal. Appl.}, 79(1):18--25, 1981.

\bibitem{tak1}
M.~Takesaki.
\newblock {\em Theory of operator algebras. {I}}, volume 124 of {\em
  Encyclopaedia of Mathematical Sciences}.
\newblock Springer-Verlag, Berlin, 2002.
\newblock Reprint of the first (1979) edition, Operator Algebras and
  Non-commutative Geometry, 5.

\bibitem{tj_bm}
Nicole Tomczak-Jaegermann.
\newblock {\em Banach-{Mazur} distances and finite-dimensional operator
  ideals}, volume~38 of {\em Pitman Monogr. Surv. Pure Appl. Math.}
\newblock Harlow: Longman Scientific \&| Technical; New York: John Wiley \&|
  Sons, Inc., 1989.

\bibitem{MR567834}
Jun Tomiyama.
\newblock Inner derivations in the tensor products of operator algebras.
\newblock {\em Tohoku Math. J. (2)}, 32(1):91--97, 1980.

\bibitem{wor-cqg}
S.~L. Woronowicz.
\newblock Compact quantum groups.
\newblock In {\em Sym\'{e}tries quantiques ({L}es {H}ouches, 1995)}, pages
  845--884. North-Holland, Amsterdam, 1998.

\end{thebibliography}



\Addresses

\end{document}